%
%
\documentclass[11pt]{amsart}
\usepackage[foot]{amsaddr}
\usepackage[utf8x]{inputenc}
\usepackage{color}
\usepackage{amsmath,amsfonts}
\usepackage{amsfonts,amsmath,amsthm,amssymb,latexsym}
\usepackage{tikz}
\usepackage{mathrsfs}

\usepackage{algorithm}
\usepackage{algorithmic}
\usepackage{ulem}

\usepackage{hyperref}

\pagestyle{empty}

\newtheorem{theorem}{Theorem}[section]
\newtheorem{proposition}[theorem]{Proposition}
\newtheorem{lemma}[theorem]{Lemma}

\theoremstyle{definition}

\newtheorem{example}[theorem]{Example}

\theoremstyle{remark}
\newtheorem{remark}[theorem]{Remark}
\numberwithin{equation}{section}
\theoremstyle{remark}

\DeclareMathOperator{\num}{num}
\DeclareMathOperator{\denom}{denom}

\newcommand{\C}{\mathbb{C}}

\newcommand{\Q}{\mathbb{Q}}

\newcommand{\N}{\mathbb{N}}
\newcommand{\FF}{\mathbb{F}}
\newcommand{\To}{\mathbb{T}}
\newcommand{\V}{\mathbb{V}}
\newcommand{\Pa}{\mathcal{P}}
\newcommand{\Qa}{\mathcal{Q}}
\newcommand{\sys}{\mathcal{F}}
\newcommand{\G}{\mathcal{G}}
\newcommand{\B}{\mathcal{B}}
\newcommand{\im}{\mathrm{Im}}

\newcommand\para{\vspace*{2mm}}

\allowdisplaybreaks

\begin{document}

{\color{red} The journal version of this paper appears in

\para

\noindent
\textit{ Sebastian Falkensteiner, J. Rafael Sendra. Transforming Radical Differential Equations to Algebraic Differential Equations.\\
Journal reference: Mediterr. J. Math. (2024) 21:87.\\  
Related DOI: \url{ https://doi.org/10.1007/s00009-024-02624-1}}

\para

\noindent
licensed under a Creative Commons Attribution 4.0 International License, \url{https://creativecommons.org/licenses/by/4.0/}}

\vspace*{1cm}

\date{\today}
\title{Transforming Radical Differential Equations to Algebraic Differential Equations}

\author{Sebastian Falkensteiner}
\address{Research Institute for Symbolic Computation (RISC), Johannes Kepler University Linz, Austria.}
\email{falkensteiner@risc.jku.at}

\author{J.Rafael Sendra}
\address{Department of Quantitative Methods, CUNEF-Universidad, Spain}
\email{jrafael.sendra@cunef.edu}

\begin{abstract}
In this paper we present an algorithmic procedure that transforms, if possible, a given system of ordinary or partial differential equations with radical dependencies in the unknown function and its derivatives into a system with polynomial relations among them by means of a rational change of variables. 
The solutions of the given equation and its transformation correspond one-to-one. 
This work can be seen as a generalization of previous work on reparametrization of ODEs and PDEs with radical coefficients.
\end{abstract}

\maketitle

\noindent \textsc{MSC Classification.} 
12H20, 34A12, 12H05, 14E08, 68W30. 

\vspace*{2mm}

\noindent{\textsc{Keywords. }
Algebraic ordinary differential equation, 
algebraic partial differential equation, 
radical differential equation,
rational reparametrization, radical parametrizations.}

\section*{Acknowledgements}
Authors partially supported by the grant PID2020-113192GB-I00 (Mathematical Visualization: Foundations, Algorithms and Applications) from the Spanish MICINN. 
Second author also partially supported by the OeAD project FR 09/2022.

\section{Introduction}
Some differential equations appear to be easier solvable than others. 
A common method for finding solutions is to first transform the given differential equation into another one where solution methods are known, find its solutions, and transform them back accordingly. 
In applied mathematics, often the given equation is transformed into an explicit differential equation where the highest-order derivatives are explicitly written as a function of the independent variables, dependent variables, and lower order derivatives. 
Then classic results such as the Cauchy-Kovalevskaya theorem are applied in order to show existence and uniqueness of solutions with given initial data fulfilling certain criteria. 
Various other approaches for computing exact solutions exist for algebraic differential equations (shortly ADEs). 
They are polynomials in the unknowns and their derivatives, and the coefficients of the polynomial are rational in the independent variables, see e.g.~\cite{ritt1950differential}. 
In this paper, we show a strategy to transform differential equations involving radical expressions to such ADEs. 
For the sake of simplicity we mainly focus on ordinary differential equations, but also give insight into the partial case.

In~\cite{caravantes2021transforming}, the authors focus on AODEs whose coefficients involve radical expressions of the independent variable $x$, and they study the possible transformation of the equation into another with only rational expressions of $x$. 
Moreover, they show how to relate the solutions of both equations. 
For instance, the method in~\cite{caravantes2021transforming} transforms
\[\sqrt{x}\,y'^3y''+y+x=0\]
into
\[z\,Y'^3Y''-Y'^4+8z^3\,Y+8z^5=0\]
by using the change $r(z)=z^2, y(z^2)=Y(z)$. 
In this paper, we take a step forward and we analyze differential equations where the unknown function and its derivatives can appear as radical expressions as for instance 
\[\sqrt{y'^2+y^2}+x\,\sqrt[3]{y}^2-yy'=2x.\]
More precisely, we present a method to find, if they exist, rational changes of variables that convert systems of ODEs and PDEs of this type into systems of algebraic differential equations. 
Similarly as in~\cite{caravantes2021transforming}, the underlying idea is to associate to the differential system a radical parametrization such that the possible change of variable can be derived from the  tower variety; we refer to~\cite{sendra2011radical,sendra2013first,sendra2017algebraic} for further details on radical varieties.

Combining the results in this paper with that in~\cite{caravantes2021transforming}, we can analyze mixed types of equations where radical expressions occur in the differential indeterminates and, separately, in the differential determinates. 
An example of a single equation of this type is the Thomas-Fermi equation, cf. also Example \ref{example-fermi} below.  
In this type of situations, one can successively apply the two transformations, the one that affects the independent variables and the one that involves the dependent ones, to solve the radicalness of the equations. 
One may apply the  transformations in any of the two orders. Nevertheless, 
the transformation on the independent variables keeps the number of equations, while the transformation on the dependent variable 
will introduce, in general, additional equations. Thus, the suggestion would be to  proceed first to resolve the radicals involving the independent variables.  

The main contribution of the paper is the development of a method that transforms the given equation into a system of algebraic differential equations. Clearly the benefits of considering algebraic differential
equations instead of differential equations, with radical dependencies, depend on the particular type of algebraic differential equations that the method generates in each particular application.  Nevertheless,  the expected benefit is that   the existing techniques for computing exact solutions of algebraic differential equations will be available (see e.g. \cite{amsBulletin}). In addition,
there have been several approaches in order to transform the equation and approximate its solutions, see for instance~\cite{burrows1984variational,turkyilmazoglu2012solution}. 
By our method, symbolic solutions of the transformed equations can be computed by well-known techniques for AODEs such as the Newton polygon method~\cite{Cano2005}.  In Example \ref{ex-maple}, we show a differential equation where the application of the method in this paper generates  more satisfactory  expressions of the solutions than the direct treatment on the given equation; for this, the example has been developed with the aid of the computer algebra system  Maple 2023.

\vspace*{2mm}

The problem that we study is the following. 
Define the radical tower
\begin{equation}\label{eq-tower}
\FF_0 = \C(z_1,\ldots,z_n) \subseteq \FF_1 \subseteq \cdots \subseteq \FF_m,
\end{equation}
where $\FF_i = \FF_{i-1}(\delta_i) = \FF_0(\delta_1,\ldots,\delta_i)$ with $\delta_i^{e_i} = \alpha_i \in \FF_{i-1}, e_i \in \N$. 
In other words, $\FF_m$ is the field of functions built using several (possibly nested) radicals.
Consider the system of differential equations
\begin{equation}\label{eq-system}
\sys = \{ F_1(x,y_1,\ldots,y_\ell^{(n_\ell)})=0, \ldots, F_N(x,y_1,\ldots,y_\ell^{(n_\ell)})=0 \}
\end{equation}
with $n=n_1+\cdots+n_\ell+\ell$, $F_i(x,z_1,\ldots,z_n) \in \FF_m(x)$.
In other words, the independent variable $x$ appears rationally and the dependent variables $y_1,\ldots,y_\ell$ and their derivatives possibly appear inside radicals. 
We may assume that $F_i$ do not have any denominators. 
Otherwise we consider their numerators $\num(F_i)$ and we assume that the solutions are not zeros of the denominators.
Our goal is to find, if it exists, a rational change of variables of the form
\begin{equation}\label{eq-substitution}
z_1 = r_1(w_1,\ldots,w_n), \ldots, z_n = r_n(w_1,\ldots,w_n)
\end{equation}
such that the new equations
\begin{equation}\label{eq-new}
G_i(x,w_1,\ldots,w_n):=F_i(x,r_1(w_1,\ldots,w_n),\ldots,r_n(w_1,\ldots,w_n))=0
\end{equation}
have rational expressions in $w_1,\ldots,w_n$ and polynomials in $x$ as coefficients. 
The denominator of $G_i$ is a multiple of the denominators of $r_1,\ldots,r_n$. 
When speaking about a solution of $G_i=0$, we again consider
\begin{equation*}
\num(G_i)(x,w_1,\ldots,w_n)=0
\end{equation*}
and we assume that the denominator of $G_i$ does not vanish when substituting in the solution.

\vspace*{2mm}

The paper is structured as follows.
The next section describes the algebraic preliminaries on radical varieties that will be used later. 
Section~\ref{sec-ODE} describes the results on systems of ordinary differential equations. 
Section~\ref{sec-special} deals with special cases of the given system. 
In Section~\ref{sec-alg} the process for transforming the differential equation, with radicals, into a system of ADEs is summarized in a procedure. 
In the situations described in Section~\ref{sec-special}, this procedure is indeed algorithmic. 
We underline this by giving some examples. 
Section~\ref{sec-PDE} treats the case of systems of partial differential equations.

\section{Preliminaries on Radical Varieties} \label{sec-pre}
In this section we recall some results on radical varieties; for further details see~\cite{sendra2017algebraic}. 
For the basic notions and results on algebraic geometry, which are fundamental here, see for example~\cite{Cox1}.

We will express tuples of variables or functions by using bars such as $\bar z = (z_1,\ldots,z_n)$.
Let $\FF_m$ and $\delta_i$ be defined as in~\eqref{eq-tower}. 
A radical parametrization of~\eqref{eq-tower} is a tuple
$$\Pa = (P_1(\bar z),\ldots,P_k(\bar z))$$
of elements of $\FF_m$ whose Jacobian has rank $n$. 
For a suitable election of branches we obtain a (usually non-rational) function with domain in $\C^{n}$ and whose image is Zariski dense in an $n$-dimensional variety. 
The Zariski closure of the image $\im(\Pa)$ is the (irreducible) radical variety determined by $\Pa$ and will be denoted by $\V(\Pa)$~\cite[Theorem 3.11 (iii)]{sendra2017algebraic}.

The function $\Pa$ can be factored as $R \circ \psi$, where $\psi(\bar{w})=(\bar{w},\bar{\delta}(\bar{w}))$, $\bar \delta$ are the radicals in the construction of the tower~\eqref{eq-tower}, and $R$ is a rational function.
Another relevant algebraic variety in our construction is the tower variety of $\Pa$, defined as the Zariski closure of $\im(\psi)$, denoted by $\V(\To)$. 
It is also irreducible of dimension $n$. 
Therefore, the map $R: \V(\To) \dashrightarrow \V(\Pa)$ is rational.
The tower variety contains useful information about $\Pa$ and its properties are essential in this work.
One more ingredient is an incidence variety, the Zariski closure of the map $\bar z \mapsto (\bar z,\bar \delta, \bar q)$, where $\bar q = (q_1,\ldots,q_k)$ represents the coordinates of the points in $\V(\Pa)$, that we denote as $\B(\Pa)$. 
This provides the projections $\pi: \B(\Pa) \dashrightarrow \V(\Pa)$ and $\pi^*: \B(\Pa) \dashrightarrow \V(\To)$. 
In~\cite{sendra2017algebraic}, generators of $\B(\Pa)$ are given and hence, using the Closure theorem (see~\cite{Cox1}), one may compute generators of the varieties $\V(\Pa)$ and $\V(\To)$.
We define the tracing index of $\Pa$ as the degree of the map $\pi$, that is the cardinality of the generic fiber of $\pi$. 
The calculation of the tracing index is described in~\cite{sendra2017algebraic}. 
The case where the tracing index is equal to one is of particular interest, because in this case $\V(\Pa)$ and $\V(\To)$ are birationally equivalent~\cite[Theorem 4.11]{sendra2017algebraic}.


\section{Differential Equations}\label{sec-ODE}
In this section, we will use some of the notation introduced previously. 
In particular, $\FF_m$ is the last field of a radical tower over $\C(z_1,\ldots,z_n)$, introduced in~\eqref{eq-tower}, and $\bar \delta= (\delta_1,\ldots,\delta_m)$ is the tuple of algebraic elements used in the construction of the tower. 
We also consider the system of differential equations introduced in~\eqref{eq-system}.
After the change of variables
\[\bar z= \bar r(\bar w)= (\bar r_1(\bar w),\ldots,\bar r_\ell(\bar w)), \]
where 
\[ \bar r_i=(r_{i,0}(\bar w),\ldots,r_{i,n_i}(\bar w)), \]
we consider $G_i(x, \bar w) = F_i(x,\bar r(\bar w))$ as in~\eqref{eq-new}. 
By a change of variable we allow mappings $\bar r(\bar w)$  whose Jacobian is generically non-singular, so that, via the Inverse Function Theorem, the transformation can be inverted. 
Recall that $G_i=0$ is an algebraic equation depending on $x, \bar w$ and no derivatives, and involves fractional expressions in $\bar w$. 
The differential relations among the variables are kept by additionally imposing, for $1 \le i \le \ell, 0 \le j < n_i$, the equations
\begin{equation*}\label{eq-Jacobian}
r_{i,j+1}(\bar w) = J_{r_{i,j}}(\bar w) \cdot (\bar w')^T = J_{r_{i,j}}(w_1,\ldots,w_n) \cdot (w_1',\ldots,w_n')^T
\end{equation*}
where $J_{r_{i,j}}$ denotes the Jacobian of $r_{i,j}$. 
Then we obtain the differential system
\begin{equation}\label{eq-system-new}
\G = \begin{cases}
G_1(x,\bar w) = 0, \ldots, G_N(x,\bar w) = 0 \\
r_{i,j+1}(\bar w) = J_{r_{i,j}}(\bar w) \cdot (\bar w')^T, \ \ 1 \le i \le \ell, 0 \le j < n_i
\end{cases}
\end{equation}

\begin{remark}\label{rem:denom}
	The algebraic equations $G_i(x,\bar w)=0$, and the differential equations $r_{j+1}(\bar w) = J_{r_j}(\bar w) \cdot (\bar w')^T$ in~\eqref{eq-system-new} might have denominators. 
	In this case, all denominators depend only on $\bar w$ and are independent of $x$. 
	Moreover, the denominator of $G_i$ arises from the change of variables in the $F_i$ and hence, it is a product of powers   of the denominators of the $r_{i,j}$. 
	Consequently, for a given function $\bar w(x)$, if the substitution $r_{j}(\bar w(x))$ is well--defined for every $0 \le j \le n$, then the denominator of $G_i(x,\bar w(x))$ is non-zero as well.
\end{remark}

\begin{remark}\label{rem:elimination}
	After possibly clearing denominators in~\eqref{eq-system-new}, the system $\G$ might be simplified by various differential elimination methods such as the Thomas decomposition or differential regular chains. 
	For a description of these two methods see~\cite{robertz2014formal,boulier2019equivalence}.
\end{remark}

\begin{remark}\label{rem:naive}
	For given $\sys$ as in~\eqref{eq-system}, an alternative approach to find a system of ADEs would be to directly replace every radical expression by a new variable. 
	More precisely, let $\bar{H}(\bar{z})$ be the tuple of non-rational coefficients of the $F(x,\bar{z}) \in \sys$ w.r.t. $x$ and set $\bar{w}:=\bar{H}(\bar{z})$. 
	Let $G_i(x,\bar{z},\bar{w})$ be the resulting ADEs after replacing $\bar{H}(\bar{z})$ by $\bar{w}$. 
	Additionally, by taking suitable powers, the equations $\bar{w}=\bar{H}(\bar{z})$ can be transformed into algebraic equations of the form $\bar{T}(\bar{z},\bar{w})=0$. 
	Then  $$\mathcal{G}=\{ G_1(x,\bar{z},\bar{w}),\ldots,G_N(x,\bar{z},\bar{w}), \bar{T}(\bar{z},\bar{w}) \}$$
	is a system of ADEs. 
	An obstacle with this approach is that the number of variables increases from $n$ to $n+\#(\bar{H})$. 
	In the end, we want to solve $\mathcal{G}$ only for $\bar{z}$. 
	For this purpose, it would be reasonable to apply a differential elimination method to $\mathcal{G}$ (with respect to an ordering where the $z_i$ are ranked lower than the $w_i$). 
	In most examples, however, this elimination step does not terminate in reasonable time.
	
	In contrast to this more direct approach, we use parametric expressions of the $\bar{w}=\bar{H}(\bar{z})$ in order to keep the number of variables. 
	In this way, the resulting system~\eqref{eq-system-new} does not involve more variables than the original system~$\sys$.
\end{remark}

The next theorem states sufficient conditions of $\bar r(\bar w)$ such that~\eqref{eq-system-new} is a system of AODEs.

\begin{theorem}\label{thm:RationalTransformation}
	Let $\sys \subset \FF_m(x)$ be as in~\eqref{eq-system}, let $\bar r(\bar w)\in \C(\bar w)^{n}$ be a rational change of variables, and let $\bar H(\bar z) \in \FF_m^\rho$ be the tuple of non-rational (function) coefficients of the $F(x,\bar z) \in \sys$ w.r.t. $x$, taken in any order. 
	The following are equivalent.
	\begin{enumerate}
		\item \eqref{eq-system-new} is a system of AODEs.
		\item All the components of $\bar H(\bar r(\bar w))$ are rational.
	\end{enumerate}
\end{theorem}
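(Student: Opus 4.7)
My plan is first to observe that the Jacobian equations $r_{i,j+1}(\bar w) = J_{r_{i,j}}(\bar w)\cdot(\bar w')^T$ in the second block of~\eqref{eq-system-new} are automatically AODEs: once denominators in $\bar w$ are cleared, they are polynomial in $\bar w$ and linear in $\bar w'$ with constant coefficients. Hence the statement reduces to the claim that the algebraic equations $G_i=0$ are AODEs if and only if each $G_i(x,\bar w) = F_i(x,\bar r(\bar w))$ lies in $\C(\bar w)[x]$.

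Next I would use the standing assumption that $x$ appears only polynomially in the $F_i$ and write
\begin{equation*}
F_i(x,\bar z) = \sum_{k} c_{i,k}(\bar z)\,x^k, \qquad c_{i,k}(\bar z) \in \FF_m,
\end{equation*}
so that after substitution $G_i(x,\bar w) = \sum_{k} c_{i,k}(\bar r(\bar w))\,x^k$. By the very definition of $\bar H$, each coefficient $c_{i,k}$ is either in $\C(\bar z)$, in which case $c_{i,k}(\bar r(\bar w)) \in \C(\bar w)$ because a rational substitution preserves rationality, or it is one of the components $H_j$ of $\bar H$. The direction $(2)\Rightarrow(1)$ follows at once from this decomposition, since then every coefficient of $G_i$ with respect to $x$ lies in $\C(\bar w)$.

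For the converse $(1)\Rightarrow(2)$ I would invoke that $x$ is an independent transcendental variable over the coefficient field, so the coefficient of $x^k$ in $G_i$ is uniquely determined and equal to $c_{i,k}(\bar r(\bar w))$. From $G_i \in \C(\bar w)[x]$ one then deduces $c_{i,k}(\bar r(\bar w)) \in \C(\bar w)$ for every $(i,k)$; specializing to the pairs for which $c_{i,k}$ coincides with a component $H_j$ of $\bar H$ yields exactly $H_j(\bar r(\bar w)) \in \C(\bar w)$. The only delicate point is the clean matching between the components of $\bar H$ and individual coefficients $c_{i,k}$, but this is baked into the definition of $\bar H$ as the list of $F_i$-coefficients lying outside $\C(\bar z)$, so no further algebraic-independence argument is needed and the whole proof reduces to coefficient comparison in $x$.
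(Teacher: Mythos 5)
Your proposal is correct and follows essentially the same route as the paper: both directions come down to expanding $F_i$ as a polynomial in $x$, noting that substitution acts coefficientwise, and comparing coefficients of $x^k$ (the paper states this more tersely, while you make the transcendence of $x$ and the matching of $\bar H$ with the non-rational coefficients explicit). Your preliminary observation that the Jacobian equations are automatically AODEs is also present in the paper's proof of $(2)\Rightarrow(1)$.
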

\begin{proof}
	``(1)$\implies$(2)'': Since~\eqref{eq-system-new} is a system of AODEs, in particular, the $G_i(x,\bar w)$ are algebraic. 
	Thus $F_i(x,\bar r(\bar w))=0$ is an algebraic equation for every $F_i \in \sys$. 
	So its coefficients, and in particular $\bar H(\bar r(\bar w))$, are rational functions.
	
	\noindent ``(2)$\implies$(1)'': After the substitution $\bar z = \bar r(\bar w)$, every $F_i(x,\bar r(\bar w))$ has coefficients which are rational in $\bar w$ and polynomial in $x$, so $G_i(x,\bar w)$ is algebraic. 
	Moreover, since $\bar{r}$ is a tuple of rational functions, the remaining equations of~\eqref{eq-system-new} are AODEs.
\end{proof}

In the sequel we associate to the differential system~\eqref{eq-system} a radical pa\-ra\-me\-tri\-za\-tion, namely
\begin{equation}\label{eq-par}
\Pa = (\bar z, \bar H(\bar z))
\end{equation}
with $\bar H$ as in Theorem~\ref{thm:RationalTransformation}, and we consider its associated radical variety $\V(\Pa)$ as well as its tower variety $\V(\To)$, see Section~\ref{sec-pre}; observe that, as required, the rank of the Jacobian of $\Pa$ is $n$.

\begin{lemma}\label{lem-unirational}
	Let $\V(\To)$ and $\V(\Pa)$ be as above. 
	Then the following statements hold.
	\begin{enumerate}
		\item $\dim(\V(\To))=\dim(\V(\Pa))=n$.
		\item If $\V(\To)$ is unirational, then $\V(\Pa)$ is unirational.
		Moreover, if the tracing index of $\Pa$ is one and $\V(\To)$ is rational, then $\V(\Pa)$ is rational.
	\end{enumerate}
\end{lemma}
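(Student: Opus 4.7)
The plan is to deduce both statements directly from the setup recalled in Section~\ref{sec-pre}, in particular from the factorization $\Pa = R \circ \psi$ where $R: \V(\To) \dashrightarrow \V(\Pa)$ is rational and $\psi(\bar w) = (\bar w, \bar\delta(\bar w))$.

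For (1), the key observation is that $\Pa = (\bar z, \bar H(\bar z))$ has its first $n$ coordinates equal to $\bar z$, so the Jacobian of $\Pa$ trivially contains an $n \times n$ identity block and thus has rank $n$. By \cite[Theorem 3.11 (iii)]{sendra2017algebraic} (already used to define $\V(\Pa)$), this forces $\dim \V(\Pa) = n$. Exactly the same argument applies to $\psi$, whose first $n$ coordinates are the identity, giving $\dim \V(\To) = n$; this is in fact the statement recalled verbatim in Section~\ref{sec-pre}.

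For the first half of (2), I would observe that $R$ is dominant: the image of $\psi$ is Zariski dense in $\V(\To)$ by construction of $\V(\To)$, and $R(\im(\psi)) = \im(\Pa)$ is Zariski dense in $\V(\Pa)$ by construction of $\V(\Pa)$, so $R$ sends a dense set to a dense set, hence is dominant. Now if $\V(\To)$ is unirational, there exists a dominant rational map $\phi: \C^N \dashrightarrow \V(\To)$ for some $N$; composing, $R \circ \phi: \C^N \dashrightarrow \V(\Pa)$ is again dominant rational, witnessing unirationality of $\V(\Pa)$.

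For the second half of (2), I simply invoke \cite[Theorem 4.11]{sendra2017algebraic}, already recalled in Section~\ref{sec-pre}: when the tracing index of $\Pa$ equals one, $R$ upgrades from dominant rational to birational, so $\V(\Pa)$ and $\V(\To)$ are birationally equivalent. Since rationality is a birational invariant, rationality of $\V(\To)$ transfers to $\V(\Pa)$.

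There is essentially no obstacle here: the lemma is a bookkeeping statement that packages the machinery of radical parametrizations into exactly the form needed by Theorem~\ref{thm:RationalTransformation} and the subsequent sections. The only place that requires a brief verification (rather than a direct citation) is the dominance of $R$, which is immediate from the density of $\im(\Pa)$ in $\V(\Pa)$.
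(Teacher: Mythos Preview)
Your proof is correct and follows essentially the same approach as the paper: the paper's own proof simply cites \cite[Theorem~3.11(iii), Theorem~4.6]{sendra2017algebraic} for item~(1) and \cite[Theorem~4.11]{sendra2017algebraic} for item~(2), and you have unpacked these citations with the explicit rank/dominance observations that justify them. The only cosmetic difference is that the paper invokes Theorem~4.6 by name for $\dim\V(\To)=n$, whereas you argue it directly from the identity block in the Jacobian of $\psi$.
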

\begin{proof}
	By Theorem 3.11(iii) and Theorem 4.6 in~\cite{sendra2017algebraic}, the dimension of both, $\V(\To)$ and $\V(\Pa)$, is equal to $n$. 
	The second item follows from Theorem 4.11 in the same reference.
\end{proof}

In the case where $\V(\To)$ is a unirational curve or surface, by Lemma~\ref{lem-unirational}, $\V(\Pa)$ is unirational and, by L\"uroth's Theorem for curves and surfaces, $\V(\To)$ and $\V(\Pa)$ are rational. 
Indeed, there exist algorithmic approaches to compute such birational parametrizations~\cite{SWP08,S1998Rational}.
In Lemma~\ref{lem-unirational} we have shown that the dimension of the tower variety $\V(\To)$ and $\V(\Pa)$ is equal to $n=n_1+\cdots+n_\ell+\ell$, where $\ell$ is the number of differential indeterminates and $n_i$ is the order with respect to $y_i$, but as we will show in Section~\ref{sec-alg}, in various situations the dimension can be reduced and the case where these varieties are curves or surfaces might occur.

\begin{theorem}\label{thm-solutions}
	Let $\sys$ be as in~\eqref{eq-system}, $\G$ be as in~\eqref{eq-system-new} and let $\Qa(\bar w)=(\bar r(\bar w),\bar \delta(\bar r (\bar w)))$ be a rational parametrization of $\V(\To)$. 
	It holds that
	\begin{enumerate}
		\item For every solution $\bar w(x)$ of $\G$, $$(y_1(x),\ldots,y_\ell(x)):=(r_{1,0}(\bar w(x)),\ldots,r_{\ell,0}(\bar w(x)))$$ is a solution of the differential system $\sys$. 
		\item If $\Qa$ is invertible, $(y_1(x),\ldots,y_\ell(x))$ is a solution of $\sys$, $$\bar w(x):= \Qa^{-1}(y_1(x),\ldots,y_\ell^{(n_\ell)}(x),\bar \delta(y_1(x),\ldots,y_\ell^{(n_\ell)}(x)))$$ and $\Qa (\bar w(x))$ are well--defined, then $\bar w(x)$ is a solution of $\G$.
	\end{enumerate}
\end{theorem}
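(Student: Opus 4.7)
The plan is to prove each part directly by exploiting how the Jacobian block of $\G$ forces the identity $y_i^{(j)}(x)=r_{i,j}(\bar w(x))$ for every admissible $i,j$, reducing everything to a chain-rule bookkeeping.

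For part (1), fix a solution $\bar w(x)$ of $\G$ and set $y_i(x):=r_{i,0}(\bar w(x))$. I would argue by induction on $j$ that $y_i^{(j)}(x)=r_{i,j}(\bar w(x))$ for all $0\le j\le n_i$. The base case $j=0$ is the definition of $y_i$. For the inductive step, differentiating $y_i^{(j)}(x)=r_{i,j}(\bar w(x))$ and applying the chain rule gives $y_i^{(j+1)}(x) = J_{r_{i,j}}(\bar w(x))\cdot \bar w'(x)^T$, which equals $r_{i,j+1}(\bar w(x))$ by the corresponding Jacobian equation in $\G$. Substituting the conclusion at $j=n_i$ yields
\[ F_i(x,y_1(x),\ldots,y_\ell^{(n_\ell)}(x)) \;=\; F_i(x,\bar r(\bar w(x))) \;=\; G_i(x,\bar w(x)) \;=\; 0, \]
so $(y_1,\ldots,y_\ell)$ solves $\sys$. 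Remark~\ref{rem:denom} keeps the denominators consistent wherever $\bar r(\bar w(x))$ is defined.

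For part (2), start from a solution $(y_1(x),\ldots,y_\ell(x))$ of $\sys$. The tuple $(y_1(x),\ldots,y_\ell^{(n_\ell)}(x),\bar\delta(y_1(x),\ldots,y_\ell^{(n_\ell)}(x)))$ lies on $\V(\To)$ by the very definition of the tower variety associated to $\Pa=(\bar z,\bar H(\bar z))$. Since $\Qa$ is an invertible rational parametrization of $\V(\To)$ and $\Qa(\bar w(x))$ is assumed to be defined, applying $\Qa$ to $\bar w(x):=\Qa^{-1}(\cdots)$ recovers that tuple, and reading off its first $n$ coordinates gives $r_{i,j}(\bar w(x))=y_i^{(j)}(x)$ for all $i,j$. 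The algebraic equations $G_i(x,\bar w(x))=0$ now follow from $F_i(x,y_1(x),\ldots,y_\ell^{(n_\ell)}(x))=0$, and the Jacobian equations follow from the chain rule exactly as in part (1): differentiating $r_{i,j}(\bar w(x))=y_i^{(j)}(x)$ produces $J_{r_{i,j}}(\bar w(x))\cdot\bar w'(x)^T=y_i^{(j+1)}(x)=r_{i,j+1}(\bar w(x))$.

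The only delicate point I foresee is the correct interpretation of $\Qa^{-1}$ in part (2). As a rational parametrization, $\Qa:\C^n\dashrightarrow\V(\To)$ is only birational and $\Qa^{-1}$ is defined on an open dense subset of $\V(\To)$; the hypothesis that $\Qa(\bar w(x))$ is defined is exactly what is needed for $\bar w(x)$ to be an honest inverse image of the algebraic-differential solution curve, and for the chain-rule identifications to hold pointwise. Once this set-theoretic care is taken, the rest of the argument is a direct unwinding of the definitions of $\Pa$, $\V(\To)$ and $\G$.
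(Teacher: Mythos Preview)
Your argument is correct and follows essentially the same route as the paper: part~(1) is the chain-rule induction showing $r_{i,j}(\bar w(x))=y_i^{(j)}(x)$ and then substituting into $F_k=G_k$, and part~(2) hinges on the fact that $\bar\sigma(x):=(y_1(x),\ldots,y_\ell^{(n_\ell)}(x),\bar\delta(\ldots))$ lies in $\V(\To)$ so that $\Qa(\Qa^{-1}(\bar\sigma(x)))=\bar\sigma(x)$, after which the Jacobian equations follow from the chain rule. The only difference is cosmetic: where you invoke birationality directly to get $\Qa\circ\Qa^{-1}=\mathrm{id}$ on points of $\V(\To)$, the paper spells this out by writing $\Qa(\Qa^{-1}(\bar u,\bar v))=(\bar u,\bar v)\bmod I(\V(\To))$, choosing generators $T_1,\ldots,T_t$ of the tower ideal, and then observing that $T_j(\bar\sigma(x))=0$ because $\bar\sigma(x)\in\im(\psi)\subset\V(\To)$.
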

\begin{proof}
	Using the differential equations in $\G$, one gets for $1 \le i \le \ell, 1 \le j < n_{i}$ that $r_{i,j}(\bar w(x))$ is the $j$-th derivative of $r_{i,0}(\bar w(x))$. 
	So, $\bar r(\bar w(x))=(y_1(x),\ldots,y_\ell^{(n_\ell)}(x))$. 
	Moreover, for every $1 \le k \le N$, since $\num(G_k)(x, \bar w(x))=0$, and, by Remark~\ref{rem:denom}, the denominator of $G_k$ specialized at $(x, \bar w(x))$ is non-zero, so
	$$F_k(x,\bar r(\bar w(x)))=G_k(x,\bar w(x))=0.$$
	
	Conversely, let $(y_1(x),\ldots,y_\ell(x))$ be a solution of $\sys$, and let $T_1,\ldots, T_t$ be generators of the tower variety; that is, $\V(\To)$ is the zero-set of $\{T_1,\ldots,T_t\}$.
	Let $\bar u=(u_1,\ldots,u_n)$ and $\bar v=(v_1,\ldots,v_m)$ be such that $(\bar u,\bar v)\in \V(\To)$ and such that $\Qa(\Qa^{-1}(\bar u, \bar v))$ is well--defined. 
	Then,
	\[ \Qa(\Qa^{-1}(\bar u, \bar v))=(\bar u, \bar v) \,\,\mod \,\, I(\V(\To)). \]
	In particular,
	\[ \bar r(\Qa^{-1}(\bar u, \bar v))=\bar u \,\,\mod \,\, I(\V(\To)). \]
	Therefore, there exist polynomials $M_{i,1},\ldots,M_{i,t}$, for $i\in \{0,\ldots,n \}$, such that
	\begin{align*} \bar r(\bar w(x))&=(y_1(x),\ldots, y_\ell^{(n_\ell)}(x))\\
	&+\left(\sum_{j=1}^{t} M_{0,j}(\bar \sigma(x)) \,T_{j}(\bar \sigma(x)), \ldots,\sum_{j=1}^{t} M_{n,j}(\bar \sigma(x)) \, T_{j}(\bar \sigma(x)) \right).
	\end{align*}
	where $\bar \sigma(x):= (y_1(x),\ldots,y_\ell^{(n_\ell)}(x),\bar \delta(y_1(x),\ldots,y_\ell^{(n_\ell)}(x)))$. 
	Furthermore, $\bar \sigma(x)=\psi (y_1(x),\ldots,y_\ell^{(n_\ell)}(x))\in \im(\psi) \subset \V(\To)$ for all $x$.
	Thus, $T_i(\bar \sigma(x))=0$ for all $i\in \{1,\ldots,t\}$, and hence
	\[ \bar r(\bar w(x))=(y_1(x),\ldots, y_\ell^{(n_\ell)}(x)).\]
	In this situation, we first observe that since $\Qa(\bar w(x))$ is well--defined, then the denominators of $\Qa$ do not vanish at $\bar w(x)$ and, by Remark~\ref{rem:denom}, $G_k(x,\bar w(x))$ is also well--defined for every $1 \le k \le N$. 
	In addition,
	\[ G_k(x,\bar w(x))=F_k(x, \bar r (\bar w(x)))=F_k(x,y_1(x),\ldots,y_\ell^{(n_\ell)}(x))=0. \]
	Now, clearly $\num(G_k)(x,\bar w(x))=0$. 
	The remaining equations in $\G$ hold by the chain rule applied to the components of $\bar r_{i}(\bar w(x))$, i.e. for $1 \le i \le \ell$, $0 \le j <n_i$
	$$r_{i,j+1}(\bar w(x))=\frac{d}{dx}(y_i^{(j)}(x))=\frac{d}{dx}(r_{i,j}(\bar w(x)))=J_{r_{i,j}}(\bar w(x)) \cdot (\bar w'(x))^T.$$
\end{proof}

Note that in Theorem~\ref{thm-solutions} we assume that $\bar r(\bar w(x))$ is well--defined. 
In the case that the tower variety $\V(\To)$ admits a polynomial parametrization $\Qa$, this is always the case. 
For details on polynomial parametrizations see e.g.~\cite{SendraNormalidad,perez2020computing}.

 In the following proposition we analyze the particular case where the system contains a single equation of the form $F(y_1,y_1')=0$ or $F(y_1,y_2)=0$.

\begin{proposition}\label{prop:denom}
	Let $\sys$ be as in~\eqref{eq-system}, $\G$ be as in~\eqref{eq-system-new} and let $\Qa(\bar w)=(\bar r(\bar w),\bar \delta(\bar r (\bar w)))$ be a rational parametrization of $\V(\To)$. 
	If $\sys = \{F\}$ consists only of an autonomous equation, $n=2$, and $\bar w(x)=(w_1(x),w_2(x))$ is a zero of the denominator of $r_1(w_1,w_2)$ or $r_2(w_1,w_2)$ and of the numerator of $G$, then $\bar w(x)$ is constant.
\end{proposition}
\begin{proof}
	Let $d_j \in \C[w_1,w_2]$ be the denominator of $r_j$ for $j \in \{1,2\}$. 
	Since $F$ is independent of $x$, also $G$ and its numerator are autonomous. 
	Hence, $\num(G)(\bar w(x))=d_j(\bar w(x))=0$ if and only if either $\bar w(x)$ is constant or $\num(G), d_j \in \C[w_1,w_2]$ share a common component. 
	In the latter case, since $\denom(G)$ is a multiple of the $d_j$, also $\num(G)$ and $\denom(G)$ have a common factor, which can be neglected.
\end{proof}

\section{Special Cases}\label{sec-special}
In the substitution~\eqref{eq-substitution}, the number of new variables is equal to the sum of the orders of the given differential equations in the system~\eqref{eq-system} with respect to the differential indeterminates. 
In many cases the radicals in $\sys$ can be considered separately or even do not contain some of the derivatives of the $y_i$. 
In these cases, one may take a projection of the tower variety where the dimension is smaller than the order. 
In the following, we deal with these special cases.

\vspace*{1mm}

\noindent \textsf{Special case I.} We start with the case where there exist some variables $z_i$ which do not occur in any of the radicals in~\eqref{eq-system}.
Let $\bar H$ be as in Theorem~\ref{thm:RationalTransformation}.  
Let us say that $I \subsetneq \{1,\ldots,n\}$ is the index set of variables $z_i$ appearing in $\bar H$ and $I^*=\{1,\ldots,n\}\setminus I$. Say that $s=\#(I)$.
We will use the following notation: 
\begin{itemize}
	\item For an $n$--tuple $\bar p$ we denote by $\bar p^c$ the $s$--tuple obtained deleting the elements in the tuple positions labeled by $I^*$.
	\item For an $s$--tuple $\bar p$ and for an $(n-s)$--tuple $\bar{q}$ we denote by $\bar p^e(\bar q)$ the $n$--tuple $\bar s$ obtained inserting into  $\bar p$ the elements of $\bar q$ so that $\bar{s}^c=\bar p$.
\end{itemize}
We consider the projection 
\[ \begin{array}{lccc} 
\pi_{I}: & \V(\To)\subset \mathbb{C}^{n+m} & \longrightarrow & \mathbb{C}^{s+m} \\
& (\bar u, \bar v) &\longmapsto & (\bar u^c, \bar v)
\end{array}\]
where $\bar u=(u_1,\ldots,u_n), \bar v=(v_1,\ldots,v_m)$ and $\V(\To)$ is as in \eqref{eq-tower}.
Let $\V(\To)_I$ be the Zariski closure of $\pi_I(\V(\To))$. 
Observe that $\V(\To)_I$ is the tower variety of the radical parametrization $(\bar{z}^{c},\bar H(\bar{z}^{c}))$ and hence, by Lemma~\ref{lem-unirational}, $\dim(\V(\To)_I)=s<n$. 
Furthermore, note that for $\bar p:=(\bar a, \bar b) \in \pi_I(\V(\To))$ where $\bar a\in \C^{s}, \bar b\in \C^m$, it holds that
\[ \pi_{I}^{-1}(\bar p)=\{ (\bar{a}^e(\bar p),\bar b)\,|\, \bar p\in \mathbb{C}^{n-s}\}. \]
For an $n$--tuple $\bar p=(p_1,\ldots,p_n)$, and for $K\subset \{1,\ldots,n\}$ we denote by $\bar p^K$ the tuple $(p_i)_{i\in K}$.
Then, if $\bar r(\bar w):=(\bar r^I(\bar w^I))^e(\bar{w}^{I^*}),$ and if 
\begin{equation}\label{eq-A}
\tilde{\Qa}(\bar w^I):=(\bar r^I(\bar w^I), \bar \delta (\bar r^I(\bar w^I))) 
\end{equation}
is a rational (resp. birational) parametrization of $\V(\To)_I$, it holds that  
\begin{equation}\label{eq-B}
\Qa(\bar w):=\pi_{I}^{-1}(\tilde{\Qa}(\bar w^I))=(\bar r(\bar{w}), \bar \delta (\bar r(\bar{w}))) 
\end{equation}
is a rational (resp. birational) parametrization of $\V(\To)$. 
Now, since $\bar H$ does not depend on $\{z_i\}_{i\in I^*}$, Theorem~\ref{thm:RationalTransformation} can be adapted as follows.

\begin{theorem}\label{thm:RationalTransformationSCI}
	Let $\sys \subset \FF_m(x)$ be as in~\eqref{eq-system}, $\G$ be as in~\eqref{eq-system-new}, $\bar H(\bar z) \in \FF_m^\rho$ be the tuple of non-rational coefficients of the polynomials $F_j(x,\bar z)\in \sys$ w.r.t. $x$ depending on the variables $z_i$ with $i \in I \subsetneq \{1,\ldots,n\}$, and let 
	$$\bar r(\bar w)=(\bar r^I(\bar w^I))^e(\bar w^{I^*})\in \C(\bar w)^{n}$$
	be a rational change of variables. The following are equivalent.
	\begin{enumerate}
		\item \eqref{eq-system-new} is a system of AODEs.
		\item All the components of $\bar H(\bar r^I(\bar w^I))$ are rational.
	\end{enumerate}
\end{theorem}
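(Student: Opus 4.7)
The plan is to mirror the proof of Theorem~\ref{thm:RationalTransformation} almost verbatim, exploiting the single structural observation that $\bar H$ depends only on the variables indexed by $I$, and that $\bar r$ acts as the identity on the complementary indices $I^{*}$. Concretely, the change of variables $\bar r(\bar w) = (\bar r^I(\bar w^I))^e(\bar w^{I^*})$ satisfies
\[
\bar H(\bar r(\bar w)) \;=\; \bar H(\bar r^I(\bar w^I)),
\]
because $\bar H$ never reads the components in position $I^{*}$. This identity is the whole bridge between the two statements and will be used in both directions.

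For ``(1)$\implies$(2)'', I would argue as in Theorem~\ref{thm:RationalTransformation}: if $\G$ is a system of AODEs, then in particular $G_i(x,\bar w)=F_i(x,\bar r(\bar w))$ is rational in $\bar w$ and polynomial in $x$ for each $i$. The coefficients of $F_i$ with respect to $x$ that are not rational lie in $\bar H(\bar z)$; after substitution these become $\bar H(\bar r(\bar w))$, which must therefore be rational. By the identity above this coincides with $\bar H(\bar r^I(\bar w^I))$, giving (2).

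For ``(2)$\implies$(1)'', assume every component of $\bar H(\bar r^I(\bar w^I))$ is rational. After the substitution $\bar z=\bar r(\bar w)$, the coefficients of $F_i$ with respect to $x$ split into two groups: the originally rational ones become rational functions of $\bar w$ (since $\bar r$ is rational), while the originally non-rational ones, which form $\bar H(\bar z)$, become $\bar H(\bar r(\bar w))=\bar H(\bar r^I(\bar w^I))$ and are rational by hypothesis. Hence each $G_i(x,\bar w)$ is algebraic. The remaining equations of~\eqref{eq-system-new} are of the form $r_{i,j+1}(\bar w)=J_{r_{i,j}}(\bar w)\cdot (\bar w')^{T}$, which are AODEs because $\bar r$ is rational; here the $I^{*}$-components contribute only identity maps, whose Jacobians are trivial and introduce no new irrationalities.

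There is no real obstacle in this proof: the argument is essentially the same as that of Theorem~\ref{thm:RationalTransformation}, and the only point worth highlighting is the reduction $\bar H(\bar r(\bar w))=\bar H(\bar r^I(\bar w^I))$ coming from the chosen shape of $\bar r$ together with the fact that $\bar H$ is independent of $\{z_i\}_{i\in I^{*}}$. The mild care needed is just to observe that the identity component of $\bar r$ on $I^{*}$ keeps the Jacobian of $\bar r$ generically non-singular whenever $\bar r^{I}$ has this property, so that the transformation is admissible in the sense of Section~\ref{sec-ODE}.
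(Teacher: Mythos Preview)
Your proposal is correct and follows exactly the approach the paper itself indicates: the paper does not even spell out a separate proof, but simply notes that ``since $\bar H$ does not depend on $\{z_i\}_{i\in I^*}$, Theorem~\ref{thm:RationalTransformation} can be adapted,'' which is precisely the reduction $\bar H(\bar r(\bar w))=\bar H(\bar r^I(\bar w^I))$ you use. Your write-up is in fact more detailed than what the paper provides.
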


Moreover, Theorem~\ref{thm-solutions} (1) can be adapted directly. 
Let us analyze the applicability of Theorem~\ref{thm-solutions} (2). 
For this purpose, we assume that $\tilde{\Qa}$ is invertible (see~\eqref{eq-A}). 
This is the case if and only if $\Qa$ is invertible (see~\eqref{eq-B}). 
Moreover, for $\bar{u}=(u_1,\ldots,u_n),$ $\bar v=(v_1,\ldots,v_m)$ it holds that 
\begin{equation}\label{eq-B-1}
\Qa^{-1}(\bar u, \bar v) = \tilde{\Qa}^{-1}(\bar u^I,\bar v)^e(\bar u^{I^*}).
\end{equation}
In this situation, Theorem~\ref{thm-solutions} (2) can be restated as follows.

\begin{theorem}\label{thm-solutions2}
	Let $\sys \subset \FF_m(x)$ be as in~\eqref{eq-system} involving radicals in the variables $z_i$ with $i \in I \subsetneq \{1,\ldots,n\}$, $\G$ be as in~\eqref{eq-system-new} and let $\tilde{\Qa}(\bar w^I)$ be a rational parametrization of $\V(\To)_I$ (see~\eqref{eq-A}) with the extension $\Qa(\bar w)=(\bar r(\bar w),\bar \delta(\bar r(\bar w)))$ (see~\eqref{eq-B}).
	It holds that
	\begin{enumerate}
		\item For every solution $\bar w(x)$ of $\G$, $$(y_1(x),\ldots,y_\ell(x)):=(r_{1,0}(\bar w(x))\ldots,r_{\ell,0}(\bar w(x)))$$ is a solution of the differential system $\sys$. 
		\item If $\Qa$ is invertible, $(y_1(x),\ldots,y_\ell(x))$ is a solution of $\sys$, $$\bar w(x):=\Qa^{-1}(y_1(x),\ldots,y_\ell^{(n_\ell)}(x),\bar \delta(y_1(x),\ldots,y_\ell^{(n_\ell)}(x)))$$
		and $\Qa(\bar w(x))$ are well--defined, then $\bar w(x)$ is a solution of $\G$.
	\end{enumerate}
\end{theorem}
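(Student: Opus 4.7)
The plan is to follow the proof of Theorem~\ref{thm-solutions} almost verbatim, substituting the extension/contraction formalism~\eqref{eq-A}--\eqref{eq-B-1} whenever a statement about $\Qa$ on $\V(\To)$ has to be reduced to a statement about $\tilde{\Qa}$ on $\V(\To)_I$. For part (1), I would start by observing that the argument transfers directly: given a solution $\bar w(x)$ of $\G$, the Jacobian equations inside $\G$ force $r_{i,j}(\bar w(x))$ to equal the $j$-th derivative of $r_{i,0}(\bar w(x))$, so, setting $y_i(x) := r_{i,0}(\bar w(x))$, one gets $\bar r(\bar w(x)) = (y_1(x), \ldots, y_\ell^{(n_\ell)}(x))$. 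Combined with Remark~\ref{rem:denom} to exclude denominator vanishing, this gives $F_k(x, \bar r(\bar w(x))) = G_k(x, \bar w(x)) = 0$. Crucially, this step does not use that every variable $z_i$ appears in a radical, so the fact that $\bar r$ acts as the identity on $\bar w^{I^*}$ is harmless.

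For part (2), the first thing to verify is that the extension $\Qa$ defined in~\eqref{eq-B} really is a rational parametrization of $\V(\To)$. This follows because $\tilde\Qa$ parametrizes $\V(\To)_I = \overline{\pi_I(\V(\To))}$, while the fibers of $\pi_I$ are precisely the affine $\bar u^{I^*}$-coordinate directions (the radicals $\bar\delta$ only depend on $\bar z^I$), so appending the identity on the $I^*$-block fills out the full variety. Next I would transcribe the identity $\bar r(\Qa^{-1}(\bar u, \bar v)) = \bar u \bmod I(\V(\To))$ used in the proof of Theorem~\ref{thm-solutions} and apply it at the point $\bar\sigma(x) := (y_1(x), \ldots, y_\ell^{(n_\ell)}(x), \bar\delta(y_1(x), \ldots, y_\ell^{(n_\ell)}(x)))$. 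Since $\bar H$, and hence $\bar\delta$, depends only on the $I$-coordinates, $\bar\sigma(x)$ is well defined and lies in $\mathrm{im}(\psi) \subset \V(\To)$, so every generator $T_i$ of $I(\V(\To))$ vanishes on $\bar\sigma(x)$. This gives the exact equality $\bar r(\bar w(x)) = (y_1(x), \ldots, y_\ell^{(n_\ell)}(x))$, from which $G_k(x, \bar w(x)) = F_k(x, \bar r(\bar w(x))) = 0$ and the Jacobian relations of $\G$ follow by the chain rule, just as in Theorem~\ref{thm-solutions}.

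The main obstacle I expect is making precise the inversion statement~\eqref{eq-B-1}, namely that $\Qa$ is invertible if and only if $\tilde\Qa$ is, with $\Qa^{-1}$ given by $\tilde\Qa^{-1}$ on the $(\bar u^I, \bar v)$-block and the identity on the $\bar u^{I^*}$-block. This is purely a matter of linear bookkeeping, since $\bar r$ restricted to the $I^*$-block is the identity and no $\delta_i$ involves $\bar w^{I^*}$, so the Jacobian of $\Qa$ is block-triangular with the identity in the $I^*$-block. Once this is recorded, the well-definedness of $\bar w(x) := \Qa^{-1}(y_1(x), \ldots, y_\ell^{(n_\ell)}(x), \bar\delta(\ldots))$ and of $\Qa(\bar w(x))$ precisely match the hypotheses of the theorem, and the proof of Theorem~\ref{thm-solutions}~(2) applies without further change.
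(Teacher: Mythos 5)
Your proposal is correct and matches the paper's intent exactly: the paper does not write out a proof of this theorem at all, but simply asserts that Theorem~\ref{thm-solutions} ``can be adapted directly'' for part (1) and ``restated'' for part (2) via the extension/contraction formalism of~\eqref{eq-A}--\eqref{eq-B-1}, which is precisely the route you take. Your write-up supplies the bookkeeping details (the fiber description of $\pi_I$, the block-identity structure of $\Qa$ on the $I^*$-coordinates, and the transfer of invertibility between $\tilde\Qa$ and $\Qa$) that the paper leaves implicit.
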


\vspace*{1mm}

\noindent \textsf{Special case II.}
Now, let us deal with the case where the radicals in the differential equations in $\sys$ can be considered separately. 
More precisely, let $\sys$ be as in~\eqref{eq-system}, and let $\bar H(\bar z) \in \FF_m^\rho$ be the tuple of non-rational (function) coefficients of $F(x,\bar z)$ w.r.t. $x$. 
We assume that the entries of $\bar H$ can be grouped separately in terms of the $z_i$. 
That is, there exist $I_1,\ldots,I_s\subset \{1,\ldots,n\}$, $I_i\cap I_j=\emptyset$ for $i\neq j$, and such that the tuple $\bar H=(h_i)_{i\in \{1,\ldots,\rho\}}$ can be re-ordered as 
\[ \bar{H}=( \bar{H}^{1},\ldots,\bar{H}^{s}) \]
where $\bar{H}^{j}$ collects the entries in $\bar H$ depending on $\{ z_d \,|\, d\in I_i\}$.
Let $I_{i}^{*}=\bigcup_{j>i} I_j$. 
Furthermore we use the notation $\bar p^K$ introduced in the special case I. 
In this situation, we will proceed recursively.

Firstly, let us study the initial step.
We consider the radical tower
\[ \To^{1}=\{\mathbb{L}_1:=\C(\bar z^{I_{1}^{*}},\bar H^2, \ldots, \bar H^s), \FF_{0} = \mathbb{L}_1(\bar z^{I_1}) \subseteq \FF_1 \subseteq \cdots \subseteq \FF_{m_1}
\} \]
where $\FF_i=\FF_{i-1}(\delta_{1i})$ with $\delta_{1i}^{e_{1i}} = \alpha_{1i} \in \FF_{i-1}\setminus \mathbb{L}_1, e_{1i} \in \N$. 
Let $\bar \delta^{1}=(\delta_{11},\ldots,\delta_{1m_1})$ be the tuple collecting the $\delta_{1i}$'s involved in $\To^{1}$. 
Then, the polynomials $F$ defining~\eqref{eq-system} can be seen as polynomials in $\FF_{m_1}(x)$. 
In addition, we consider the radical parametrization
\[ \Pa^{1}:=(\bar z^{I_1}, H^{1}(\bar z^{I_1})) \]
as well as the radical variety $\V(\Pa^1)$ and its associated tower variety
\[ \V(\To^1) \subset \C^{\#(I_1)+m_1}.\]
By Lemma~\ref{lem-unirational}, we have that $\dim(\V(\To^1))=\dim(\V(\Pa^1))=\#(I^1)$.
Now, let us assume that $\V(\To^1)$ is rational, and let $\bar r^{I_1}(\bar w^{I_1})$ be the corresponding rational change of variables. 
Then we consider the associated system~\eqref{eq-system-new} where $r_{k,d}$ for $j \in I_1^{*}, z_j=y_k^{(d)}$ are  the dependent variables not treated yet in the recursive process. 

For $1<k \le s$ and after $k-1$ many steps, we obtain the radical tower
\[ \To^{k}=\{\mathbb{L}_k:=\C(\bar z^{I_k^*},\bar H^{k+1}, \ldots, \bar H^s), \FF_0 = \mathbb{L}_k(\bar z^{I_k}) \subseteq \FF_1 \subseteq \cdots \subseteq \FF_{m_k} \} \]
where $\FF_i=\FF_{i-1}(\delta_{ki})$ with $\delta_{ki}^{e_{ki}} = \alpha_{ki} \in \FF_{i-1}\setminus \mathbb{L}_k, e_{ki} \in \N$ with the radical parametrization
\[ \Pa^k:=(\bar z^{I_k}, H^k(\bar y^{I_k})) . \]
Again, the radical variety $\V(\Pa^k)$ and its associated tower variety
\[ \V(\To^k) \subset \C^{\#(I_k)+m_k} \]
have   dimension $\#(I^k)$. 
For a rational change of variables $\bar r^{I_k}(\bar w^{I_k})$, the equations in system~\eqref{eq-system-new} have to be updated.

Eventually, if all tower varieties $\V(\To^1),\ldots,\V(\To^s)$ are rational, Theorem~\ref{thm:RationalTransformation} and Theorem~\ref{thm-solutions} hold by recursively applying the components in the corresponding rational change of variables
\[ \bar r(\bar w)=(\bar r^{I_1}(\bar w^{I_1}),\ldots,\bar r^{I_s}(\bar w^{I_s}) . \]
Note that by the structure of the radicals in case II, the change of variables can be performed also in parallel.

\begin{remark}
	Note that special case I can be seen as a particular instance of special case II:
	In the notation from above, let us define $I_1$ as the index set of variables $z_i$ which occur in radical expressions in $\sys$ and set $I_2=I^*$. 
	Then the radical tower $\To^2$ is trivial and the change of variables is simply the identity, namely $\bar r^{I_2}(\bar w^{I_2})=\bar w^{I_2}$.
\end{remark}

\section{Algorithmic Treatment}\label{sec-alg}
The previous ideas are in principle algorithmic, except that for arbitrary dimension it cannot be decided whether the tower variety $\V(\To)$ is rational or not. 
Therefore, we present the following procedure in a non-algorithmic way. 
In some special situations, however, the procedure is indeed an algorithm.

\vspace*{2mm}

Let $\sys$ be a system of ordinary differential equations as in~\eqref{eq-system}. 
\begin{enumerate}
	\item Collect in the tuple $\bar H(z_1,\ldots,z_n)$ all radical expressions in~\eqref{eq-system}.
	\item If it exists, compute a partition $I_1,\ldots,I_s \subseteq \{1,\ldots,n\}$ and $\bar H = (\bar H^1,\ldots,\bar H^s)$ such that the radicals are separated (cf. Special case II in Section~\ref{sec-special}).
	\item For every tower variety $\V(\To^k)$, compute a rational parametrization $\Qa_k(\bar r^{I_k},\bar \delta^{I_k}(\bar r^{I_k}))$ if possible.
	\item Perform the change of variables $\bar z = \bar r(\bar w)$ in order to obtain a system of AODEs $\G$ as in~\eqref{eq-system-new}.
\end{enumerate}

\begin{remark}
	In the case that all the dimensions of the tower varieties $\V(\To^k)$, are either one or two, namely when the varieties $\V(\To^k)$ define a curve or surface, it can be decided whether a rational parametrization $\Qa$ exists. 
	In the affirmative case, compute such a proper (i.e. invertible) rational parametrization (see~\cite{SWP08,S1998Rational}). 
	In this way, the above procedure is indeed an algorithm for this special situation.
\end{remark}

\begin{remark}
	Usually, one is interested in working, if possible, with real solutions. 
	With the method described in this paper, if the system $\G$ has real solutions, and the tower variety can be parametrized over the field of the real numbers, then the deduced solutions of the system $\sys$ are also real. 
	If the tower variety is a curve, there exist algorithms to either directly compute a real rational parametrization (see~\cite{sendra1999algorithms}) or to reparametrize complex parametrizations into real parametrizations (see~\cite{recio1997real}). 
	If the tower variety is a surface the situation is not so complete as in the case of curves, but still there exist algorithms to compute real parametrizations for certain types of surfaces (see e.g.~\cite{andradas2011proper,andradas2009simplification,Schicho1998RationalPO,Schicho2000ProperPO}).
\end{remark}

 We finish this section illustrating the previous ideas by some examples. 
 
\begin{example}\label{ex-maple}
We consider the ordinary differential equation
\[ F:=\sqrt{y \! \left(x \right)^{2}+\left(\frac{d}{d x}y \! \left(x \right)\right)^{2}}+y \! \left(x \right)^{5}=0.
\]
First, we try to solve the equation using the command \textsf{dsolve} of the computer algebra system Maple 2023.  Maple  provides huge implicit expressions for the solutions, involving integrals, that look hard to be used; we omit here the output.

 Let us apply the method developed in the paper. The radical in the equation in  
$\sqrt{z_{1}^{2}+z_{2}^{2}}$ and the corresponding radical parametrization  is
\[ \Pa(z_1,z_2)=\left(z_1,z_2,\sqrt{z_{1}^{2}+z_{2}^{2}}\right). \]
The tower variety is  
\[  \mathbb{V}(\mathbb{T})=
\left\{ (u_1,u_2,\Delta)\in \C^3\,|\, \Delta^2 - u_{1}^2 - u_{2}^2=0 \right\} \]
which is  a rational surface that can be parametrized as 
\[ \mathcal{Q}(w_1,w_2)=\left(\frac{w_{2} \left(w_{1}^{2}-1\right)}{w_{1}^{2}+1}, 
\frac{2 w_{2} w_{1}}{w_{1}^{2}+1}, w_{2}\right). 
 \]
 So, the differential system~\eqref{eq-system-new}, associated to the equation $F(y,y')=0$ via $\mathcal{Q}$, is
\[
\left. \begin{array}{r} 
 w_{2}\! \left(x \right)^{3} \left(w_{1}\! \left(x \right)^{10} w_{2}\! \left(x \right)^{2}+w_{1}\! \left(x \right)^{10}-5 w_{1}\! \left(x \right)^{8} w_{2}\! \left(x \right)^{2}+5 w_{1}\! \left(x \right)^{8}\right.\,\,\,\,\,
  \,\,\,\,\,\\ \left. +10 w_{1}\! \left(x \right)^{6} w_{2}\! \left(x \right)^{2}+10 w_{1}\! \left(x \right)^{6}-10 w_{1}\! \left(x \right)^{4} w_{2}\! \left(x \right)^{2}+10 w_{1}\! \left(x \right)^{4} \right.\,\,\,\,\,
  \,\,\,\,\,\\ \left. +5 w_{2}\! \left(x \right)^{2} w_{1}\! \left(x \right)^{2}+5 w_{1}\! \left(x \right)^{2}-w_{2}\! \left(x \right)^{2}+1\right)=0
\\ \noalign{\vspace*{1mm}}
-w_{1}\! \left(x \right)^{4} \left(\frac{d}{d x}w_{2}\! \left(x \right)\right)+2 w_{1}\! \left(x \right)^{3} w_{2}\! \left(x \right)-4 w_{2}\! \left(x \right) w_{1}\! \left(x \right) \left(\frac{d}{d x}w_{1}\! \left(x \right)\right) \,\,\,\,\,
  \,\,\,\,\,\\  \noalign{\vspace*{1mm}}  +2 w_{2}\! \left(x \right) w_{1}\! \left(x \right)+\frac{d}{d x}w_{2}\! \left(x \right)
=0
 \end{array}\right\}\]
Now, applying the Maple command \textsf{dsolve} to the system we get the solutions
\[ (\pm i,0), (\pm 1,0), (w_1(x),0), (0,\pm 1), \left(-\tan \! \left(\dfrac{x}{3}+\dfrac{c_{1}}{3}\right), \pm b(x)\right)\]
where
\[b(x)= \frac{\left(\cos^{2}\left(\frac{x}{3}+\frac{c_{1}}{3}\right)\right) \sqrt{-\left(\sec^{2}\left(\frac{x}{3}+\frac{c_{1}}{3}\right)\right) \left(\sec^{2}\left(\frac{x}{3}+\frac{c_{1}}{3}\right)-2\right)}}{{\left(2 \left(\cos^{2}\left(\frac{x}{3}+\frac{c_{1}}{3}\right)\right)-1\right)}^{3}}. \]
Let 
\[ r(w_1,w_2)=\dfrac{w_{2} \left(w_{1}^{2}-1\right)}{w_{1}^{2}+1}.\]
Then, for each solution above, if the substitution in $r$ is well--defined, we get a solution of the initial equation. More precisely
\[
\begin{array}{ccc}
(w_1(x), w_2(x)) & & y(x) \\
\noalign{\vspace*{1mm}}
\hline
\noalign{\vspace*{1mm}}
(\pm \imath,0) & \longrightarrow & \text{no solution} \\
(\pm 1,0) & \longrightarrow &   0 \\
(w_1(x),0) & \longrightarrow & 0 \\
(0,\pm 1) & \longrightarrow &  0 \\ 
\noalign{\vspace*{1mm}}
 \left(-\tan \! \left(x +c_{1} \right), b(x) 
\right) & \longrightarrow & -\dfrac{1}{{\left(2 \left(\cos^{2}\left(\frac{x}{3}+\frac{c_{1}}{3}\right)\right)-1\right)}^{\frac{3}{2}}}\end{array}
\]
The solution 
\[  \left(-\tan \! \left(x +c_{1} \right), - b(x)
\right) \]
of~\eqref{eq-system-new} provides the solution
\[ y(x)=\frac{1}{{\left(2 \left(\cos^{2}\left(\frac{x}{3}+\frac{c_{1}}{3}\right)\right)-1\right)}^{\frac{3}{2}}} \]
of the conjugated equation 
\[ -\sqrt{y \! \left(x \right)^{2}+\left(\frac{d}{d x}y \! \left(x \right)\right)^{2}}+y \! \left(x \right)^{5}=0.\]
\end{example}

\begin{example}
	Let us consider the system of ordinary differential equations
	\begin{equation*}
	\sys = \left\{
	\begin{aligned}
	F_1 &= \sqrt{y_1^2+y_1'^2}-2x+y_2^{2/3} = 0,\\
	F_2 &= \sqrt[3]{y_2^2-y_2'}-3y_1y_1' = 0
	\end{aligned}
	\right.
	\end{equation*}
	The radical expressions are $\bar H(z_1,z_2,z_3,z_4)=(\sqrt{z_1^2+z_2^2},\sqrt[3]{z_3},\sqrt[3]{z_3-z_4})$. 
	We set $I_1=\{1,2\}$ and $I_2=\{3,4\}$. 
	The tower variety $\V(\To^1)$ is the surface given by $(z_1,z_2,\delta_1)$ where $\delta_1^2=z_1^2+z_2^2$. 
	$\V(\To^1)$ can be parametrized, for example, as $$\Qa_1(w_1,w_2)=\left(\frac{(w_1^2-1)w_2}{w_1^2+1},\frac{2w_1w_2}{w_1^2+1},w_2\right).$$
	The tower variety $\V(\To^2)$ is given by $(z_3,z_4,\delta_2,\delta_3)$ where $\delta_2^3=z_3$, $\delta_3^2=z_3^2-z_4$ and has the rational parametrization $\Qa_2(w_3,w_4)=(w_3^3, w_3^6-w_4^3,w_3,w_4)$.
	Then the system~\eqref{eq-system} is
	\begin{equation*}
	\G = \left\{
	\begin{aligned}
	&G_1=w_3^2+w_2-2x=0, \ G_2=w_1^4w_4-6w_1^3w_2^2+2w_1^2w_4+6w_2^2w_1+w_4=0, \\
	&w_2^4w_2'+(4w_1w_2-1)w_1'=2(w_1^2+1)w_1w_2, \ 3w_3^2w_3'=w_3^6-w_4^3
	\end{aligned}
	\right.
	\end{equation*}
	Let us remark that the procedure suggested in Remark~\ref{rem:naive} does not terminate in reasonable time for this example.
\end{example}

\begin{example}\label{example-fermi}
	Let us consider the Thomas-Fermi equation
	$$\sqrt{x}\,y''-y^{3/2} = 0.$$
	By the methods from~\cite{caravantes2021transforming}, the given differential equation gets transformed by $x=z^2$ into the differential equation
	$$F=z\,Y''-Y'-4z^2\,Y^{3/2} = 0.$$
	The only radical expression in $F$ is $Y^{1/2}$ (special case I), leading to a tower variety $\V(\To)$  implicitly defined by the polynomial $\Delta^2-Y$ and 	
	 parametrized by $(Y,\Delta):=(W^2,W)$. 
	By using the transformation $Y=W^2$, and directly set $Y'=2WW', Y''=2(WW''+W'^2)$, the above equation is equivalent to
	$$G=z\,W\,W''+z\,W'^2-W\,W'-2z^2\,W^3 = 0.$$
	
	By using the Newton polygon method~\cite{Cano2005}, we obtain for the given initial value $y(0)=W(0)=0$ the solution family
	$$W(z)=cz + \frac{c^2z^5}{12} + \frac{c^3z^9}{360} + \mathcal{O}(z^{13}),$$
	where $c>0$ is an arbitrary constant, and hence,
	$$y(x)=\sqrt{W(\sqrt{x})} = \sqrt{c}\sqrt{x} + \frac{c^{3/2}x^{9/2}}{24} + \frac{c^{5/2}x^{17/2}}{1920} + \mathcal{O}(x^{25/2}).$$
	For given $c>0$, the terms of order $25/2$ are uniquely given and no field extension of the coefficients is necessary anymore. 
	Hence, existence and uniqueness of the solution is guaranteed and $y(x)$ is a convergent Puiseux series in $\Q(\sqrt{c})((x^{1/2}))$.
	
	Let us note that for the frequently used initial values $y(0)=1, y(\infty)=0$ there is no formal Puiseux series solution and other solution methods as the Newton polygon method have to be used for finding solutions of $G=0$ and hence, of the Thomas-Fermi equation itself. 
\end{example}

\section{Partial Differential Equations}\label{sec-PDE}
In this section we treat systems of partial differential equations $\sys$ in the differential indeterminates $x_1,\ldots,x_q$. 
More precisely, let $\sys$ consist of differential equations of order at most $s$ and possibly involving radicals in $\Theta(\alpha)y_k:=\frac{\partial^{|\alpha|} y_k}{\partial x_1^{\alpha_1} \cdots \partial x_q^{\alpha_q}}$ with $1 \le k \le \ell$, $|\alpha| \le s$. 
Let $z_1,\ldots,z_n$ enumerate the $\Theta(\alpha)y_k$ and consider a radical tower~\eqref{eq-tower}. 
We assume that $\sys \subset \FF_m(x_1,\ldots,x_q)$.

We are looking for a change of variables~\eqref{eq-substitution} such that $F(\bar x,\bar r(\bar w))$ is an algebraic equation for every $F$ in $\sys$. 
Let $z_i=\Theta(\alpha)y_k, z_j=\Theta(\alpha+e_p)y_k$ for some $|\alpha|<s$ and a unit vector $e_p$ which is one exactly at the $p$-th position. 
The differential relations are kept by additionally imposing all such
\begin{equation}\label{eq-chainRulePartial}
\sum_{t=1}^n \frac{\partial\,r_i}{\partial w_t}(\bar w) \cdot \frac{\partial\,w_t}{\partial x_p} = r_j(\bar w).
\end{equation}

In this way, Theorem~\ref{thm:RationalTransformation} and Theorem~\ref{thm-solutions} hold in the expected way as for the case of ODEs (where $q=1$). 
Let us illustrate this in the following example.

\begin{example}\label{ex-partial}
	Let us consider the system of partial differential equations
	\begin{equation*}
	\sys = \left\{
	\begin{aligned}
	F_1 &= \sqrt[3]{\sqrt{z_1-1}+z_3}+x_1\,(z_1+\sqrt{z_5}) = 0,\\
	F_2 &= \sqrt{z_2+1}^3+x_2^2\,z_1z_4 = 0,\\
	F_3 &= x_1\,\sqrt{z_1z_4-z_2z_3}+x_2\,z_5^2-1 = 0
	\end{aligned}
	\right.
	\end{equation*}
	where $(z_1,z_2,z_3,z_4,z_5)=(y_1,\frac{\partial\,y_1}{\partial x_1},\frac{\partial\,y_1}{\partial x_2},\frac{\partial^2\,y_1}{\partial x_1 \partial x_2},y_2)$. 
	The radical coefficients are 
	collected in 
	$$\bar{H}=\left(\sqrt[3]{\sqrt{z_1-1}+z_3},\sqrt{z_5},\sqrt{z_2+1}^3,\sqrt{z_1z_4-z_2z_3}\right).$$
	We set $I_1=\{1,2,3,4\}$ and $I_2=\{5\}$. 
	The tower variety $\V(\To^1)$ is given by $(\bar z^{I_1},\delta_1,\ldots,\delta_4)$ where $\delta_1^2=z_1-1, \delta_2^3=\delta_1+z_3, \delta_3^2=z_1z_4-z_2z_3, \delta_4^2=z_2+1$. 
	A parametrization of $\V(\To^1)$ is
	\begin{align*}
	\Qa_1(\bar w^{I_1}) = \big(&w_1^2+1,w_4^2-1, w_2^3-w_1,\frac{w_2^3w_4^2-w_1w_4^3-w_2^3+w_3^2+w_1}{w_1^2+1},\\
	&w_1,w_2,w_3,w_4 \big).
	\end{align*}
	Additionally considering the transformation $\Qa_2(w_5)=(w_5^2,w_5)$, and the differential relations, we obtain the system
	\begin{equation*}
	\G = \left\{
	\begin{aligned}
	G_1 &= w_2+x_1\,(w_1^2+1+w_5) = 0,\\
	G_2 &= w_4^3+x_2^2(w_2^3w_4^2-w_1w_4^3-w_2^3+w_3^2+w_1) = 0,\\
	G_3 &= x_1w_3+x_2w_5^4-1 = 0,\\
	&2w_1 \cdot \frac{\partial\,w_1}{\partial x_1} = w_4^2-1, \quad
	2w_1 \cdot \frac{\partial\,w_1}{\partial x_2} = w_2^3-w_1, \\
	&\left(3w_2^2 \cdot \frac{\partial\,w_2}{\partial x_1}-\frac{\partial\,w_1}{\partial x_1}\right) \cdot (w_1^2+1) = w_2^3w_4^3-w_1w_4^3-w_2^3+w_3^2+w_1, \\
	&2w_4(w_1^2+1) \cdot \frac{\partial\,w_4}{\partial x_2} = w_2^3w_4^3-w_1w_4^3-w_2^3+w_3^2+w_1
	\end{aligned}
	\right.
	\end{equation*}
\end{example}

\section{Declarations}
 \begin{itemize}
\item Ethical Approval: not applicable
\item Competing interests: The authors declare that they have no competing interests.
\item Authors' contributions: Both authors contributed equally and significantly in writing this paper.
\item Availability of data and materials: not applicable
 \end{itemize}
 
\appendix

\bibliographystyle{acm}

\begin{thebibliography}{30}
	
	\bibitem{andradas2009simplification}
	{\sc Andradas, C., Recio, T., Sendra, J.~R., and Tabera, L.~F.}
	\newblock On the simplification of the coefficients of a parametrization.
	\newblock {\em Journal of symbolic computation 44}, 2 (2009), 192--210.
	
	\bibitem{andradas2011proper}
	{\sc Andradas, C., Recio, T., Tabera, L.~F., Sendra, J.~R., and Villarino, C.}
	\newblock Proper real reparametrization of rational ruled surfaces.
	\newblock {\em Computer aided geometric design 28}, 2 (2011), 102--113.
	
	\bibitem{boulier2019equivalence}
	{\sc Boulier, F., Lemaire, F., Poteaux, A., and Maza, M.~M.}
	\newblock An equivalence theorem for regular differential chains.
	\newblock {\em Journal of Symbolic Computation 93} (2019), 34--55.
	
	\bibitem{burrows1984variational}
	{\sc Burrows, B., and Core, P.}
	\newblock {A variational-iterative approximate solution of the Thomas-Fermi
		equation}.
	\newblock {\em Quarterly of Applied Mathematics 42}, 1 (1984), 73--76.
	
	\bibitem{Cano2005}
	{\sc Cano, J.}
	\newblock The {N}ewton {P}olygon {M}ethod for {D}ifferential {E}quations.
	\newblock In {\em Proceedings of the 6th International Conference on Computer
		Algebra and Geometric Algebra with Applications} (Berlin, Heidelberg,
	2005), IWMM'04/GIAE'04, pp.~18--30.
	
	\bibitem{caravantes2021transforming}
	{\sc Caravantes, J., Sendra, J.~R., Sevilla, D., and Villarino, C.}
	\newblock Transforming odes and pdes from radical coefficients to rational
	coefficients.
	\newblock {\em Mediterranean Journal of Mathematics 18}, 3 (2021), 1--14.
	
	\bibitem{Cox1}
	{\sc Cox, D., Little, J., and O'Shea, D.}
	\newblock {\em Ideals, {V}arieties, and {A}lgorithms: an {I}ntroduction to
		{C}omputational {A}lgebraic {G}eometry and {C}ommutative {A}lgebra}.
	\newblock Springer Science \& Business Media, 2013.
	
	\bibitem{amsBulletin} 
	 {\sc  Falkensteiner F., Mitteramskogler J.J, Sendra J.R., Winkler F,}
\newblock The Algebro-Geometric Method: solving algebraic differential equations by parametrizations.
\newblock {\em Bulletin of the American Mathematical Society Volume 60, Number 1, January 2023, Pages 85--122.}
	
	\bibitem{perez2020computing}
	{\sc P{\'e}rez-D{\'\i}az, S., and Sendra, J.~R.}
	\newblock Computing birational polynomial surface parametrizations without base
	points.
	\newblock {\em Mathematics 8}, 12 (2020), 2224.
	
	\bibitem{recio1997real}
	{\sc Recio, T., and Sendra, J.~R.}
	\newblock Real reparametrizations of real curves.
	\newblock {\em Journal of Symbolic Computation 23}, 2-3 (1997), 241--254.
	
	\bibitem{ritt1950differential}
	{\sc Ritt, J.}
	\newblock {\em Differential {A}lgebra}, vol.~33.
	\newblock American Mathematical Society, 1950.
	
	\bibitem{robertz2014formal}
	{\sc Robertz, D.}
	\newblock {\em Formal algorithmic elimination for PDEs}, vol.~2121.
	\newblock Springer, 2014.
	
	\bibitem{Schicho1998RationalPO}
	{\sc Schicho, J.}
	\newblock Rational parametrization of real algebraic surfaces.
	\newblock In {\em ISSAC '98} (1998).
	
	\bibitem{S1998Rational}
	{\sc Schicho, J.}
	\newblock Rational parametrization of surfaces.
	\newblock {\em Journal of Symbolic Computation 26}, 1 (1998), 1--29.
	
	\bibitem{Schicho2000ProperPO}
	{\sc Schicho, J.}
	\newblock Proper parametrization of real tubular surfaces.
	\newblock {\em J. Symb. Comput. 30} (2000), 583--593.
	
	\bibitem{SendraNormalidad}
	{\sc Sendra, J.}
	\newblock Normal parametrizations of algebraic plane curves.
	\newblock {\em Journal of Symbolic Computation 33}, 6 (2002), 863--885.
	
	\bibitem{sendra2011radical}
	{\sc Sendra, J.~R., and Sevilla, D.}
	\newblock Radical parametrizations of algebraic curves by adjoint curves.
	\newblock {\em Journal of Symbolic Computation 46}, 9 (2011), 1030--1038.
	
	\bibitem{sendra2013first}
	{\sc Sendra, J.~R., and Sevilla, D.}
	\newblock First steps towards radical parametrization of algebraic surfaces.
	\newblock {\em Computer Aided Geometric Design 30}, 4 (2013), 374--388.
	
	\bibitem{sendra2017algebraic}
	{\sc Sendra, J.~R., Sevilla, D., and Villarino, C.}
	\newblock Algebraic and algorithmic aspects of radical parametrizations.
	\newblock {\em Computer Aided Geometric Design 55} (2017), 1--14.
	
	\bibitem{sendra1999algorithms}
	{\sc Sendra, J.~R., and Winkler, F.}
	\newblock Algorithms for rational real algebraic curves.
	\newblock {\em Fundamenta Informaticae 39}, 1, 2 (1999), 211--228.
	
	\bibitem{SWP08}
	{\sc Sendra, J.~R., Winkler, F., and Pérez-Díaz, S.}
	\newblock {\em Rational Algebraic Curves}.
	\newblock Algorithms and Computation in Mathematics. Springer-Verlag Berlin
	Heidelberg, 2008.
	
	\bibitem{turkyilmazoglu2012solution}
	{\sc Turkyilmazoglu, M.}
	\newblock {Solution of the Thomas--Fermi equation with a convergent approach}.
	\newblock {\em Communications in Nonlinear Science and Numerical Simulation
		17}, 11 (2012), 4097--4103.
	
\end{thebibliography}

\end{document}